\documentclass[11pt,leqno]{article}
\usepackage[margin=1in]{geometry} 
\usepackage{amssymb,amsfonts,amsmath,bbm,mathrsfs,stmaryrd,mathtools}
\usepackage{xcolor}
\usepackage{url}

\usepackage{graphicx}

\usepackage{accents}

\usepackage{extarrows}

\usepackage[shortlabels]{enumitem}
\usepackage{tensor}

\usepackage{xr}
\usepackage[T1]{fontenc}
\usepackage[utf8]{inputenc}

\usepackage[colorlinks,
linkcolor=black!75!red,
citecolor=blue,
pdftitle={},
pdfproducer={pdfLaTeX},
pdfpagemode=None,
bookmarksopen=true,
bookmarksnumbered=true,
backref=page]{hyperref}

\usepackage{tikz}
\usetikzlibrary{arrows,calc,decorations.pathreplacing,decorations.markings,decorations.shapes,intersections,shapes.geometric,through,fit,shapes.symbols,positioning,decorations.pathmorphing}

\makeatletter
\newlength\zig@L
\newlength\zig@La
\newlength\zig@Lb

\newcommand{\xzigrightarrow}[2][]{%
  \mathrel{%
    \settowidth{\zig@La}{$\scriptstyle #2$}%
    \settowidth{\zig@Lb}{$\scriptstyle #1$}%
    \zig@L=\zig@La\relax
    \ifdim\zig@Lb>\zig@L \zig@L=\zig@Lb\fi
    \advance\zig@L by 2.2em\relax
    \tikz[baseline=-0.65ex]{%
      \draw[->,
            line cap=round,
            decorate,
            decoration={zigzag,segment length=4pt,amplitude=1.1pt}]%
        (0,0) -- (\zig@L,0)
        node[midway,above=2pt] {$\scriptstyle #2$}%
        \if\relax\detokenize{#1}\relax\else
          node[midway,below=2pt] {$\scriptstyle #1$}%
        \fi
      ;
    }%
  }%
}
\makeatother

\makeatletter
\newcommand{\squigjoin}{1mu} 

\def\sqleft@{\sim}                    
\def\sqmid@{\sim\mkern-\squigjoin}    

\def\rightsquigarrowfill@{%
  \arrowfill@{\sqleft@}{\sqmid@}{\mkern-4mu\succ}%
}

\newcommand{\xrightsquigarrow}[2][]{%
  \ext@arrow 0359\rightsquigarrowfill@{#1}{#2}%
}
\makeatother

\makeatletter
\newcommand*\circled[1]{\tikz[baseline=(char.base)]{
    \node[shape=circle, draw, inner sep=0pt, 
    minimum height={\f@size},] (char) {\vphantom{WAH1g}#1};}}
\makeatother

\makeatletter
\DeclareRobustCommand\widecheck[1]{{\mathpalette\@widecheck{#1}}}
\def\@widecheck#1#2{%
    \setbox\z@\hbox{\m@th$#1#2$}%
    \setbox\tw@\hbox{\m@th$#1%
       \widehat{%
          \vrule\@width\z@\@height\ht\z@
          \vrule\@height\z@\@width\wd\z@}$}%
    \dp\tw@-\ht\z@
    \@tempdima\ht\z@ \advance\@tempdima2\ht\tw@ \divide\@tempdima\thr@@
    \setbox\tw@\hbox{%
       \raise\@tempdima\hbox{\scalebox{1}[-1]{\lower\@tempdima\box
\tw@}}}%
    {\ooalign{\box\tw@ \cr \box\z@}}}
\makeatother

\usepackage{braket}

\usepackage[amsmath,thmmarks,hyperref]{ntheorem}
\usepackage{cleveref}

\newcommand\nthalias[1]{\AddToHook{env/#1/begin}{\crefalias{lemma}{#1}}}

\nthalias{definition}
\nthalias{example}
\nthalias{examples}
\nthalias{remark}
\nthalias{remarks}
\nthalias{convention}
\nthalias{notation}
\nthalias{construction}
\nthalias{sketch}
\nthalias{theoremN}
\nthalias{propositionN}
\nthalias{corollaryN}
\nthalias{lemma}
\nthalias{proposition}
\nthalias{corollary}
\nthalias{theorem}
\nthalias{conjecture}
\nthalias{question}
\nthalias{assumption}

\creflabelformat{enumi}{#2#1#3}

\crefname{section}{Section}{Sections}
\crefformat{section}{#2Section~#1#3} 
\Crefformat{section}{#2Section~#1#3} 

\crefname{subsection}{\S}{\S\S}
\AtBeginDocument{%
  \crefformat{subsection}{#2\S#1#3}%
  \Crefformat{subsection}{#2\S#1#3}%
}

\crefname{subsubsection}{\S}{\S\S}
\AtBeginDocument{%
  \crefformat{subsubsection}{#2\S#1#3}%
  \Crefformat{subsubsection}{#2\S#1#3}%
}

\theoremstyle{plain}

\newtheorem{lemma}{Lemma}[section]
\newtheorem{proposition}[lemma]{Proposition}
\newtheorem{corollary}[lemma]{Corollary}
\newtheorem{theorem}[lemma]{Theorem}

\theoremstyle{plain}
\theoremnumbering{Alph}

\theoremstyle{plain}
\theorembodyfont{\upshape}
\theoremsymbol{\ensuremath{\blacklozenge}}

\newtheorem{definition}[lemma]{Definition}
\newtheorem{example}[lemma]{Example}

\newtheorem{remark}[lemma]{Remark}
\newtheorem{remarks}[lemma]{Remarks}

\newtheorem{notation}[lemma]{Notation}

\crefname{definition}{definition}{definitions}
\crefformat{definition}{#2definition~#1#3} 
\Crefformat{definition}{#2Definition~#1#3} 

\crefname{ex}{example}{examples}
\crefformat{example}{#2example~#1#3} 
\Crefformat{example}{#2Example~#1#3} 

\crefname{exs}{example}{examples}
\crefformat{examples}{#2example~#1#3} 
\Crefformat{examples}{#2Example~#1#3} 

\crefname{remark}{remark}{remarks}
\crefformat{remark}{#2remark~#1#3} 
\Crefformat{remark}{#2Remark~#1#3} 

\crefname{remarks}{remark}{remarks}
\crefformat{remarks}{#2remark~#1#3} 
\Crefformat{remarks}{#2Remark~#1#3} 

\crefname{convention}{convention}{conventions}
\crefformat{convention}{#2convention~#1#3} 
\Crefformat{convention}{#2Convention~#1#3} 

\crefname{notation}{notation}{notations}
\crefformat{notation}{#2notation~#1#3} 
\Crefformat{notation}{#2Notation~#1#3} 

\crefname{table}{table}{tables}
\crefformat{table}{#2table~#1#3} 
\Crefformat{table}{#2Table~#1#3}

\crefname{lemma}{lemma}{lemmas}
\crefformat{lemma}{#2lemma~#1#3} 
\Crefformat{lemma}{#2Lemma~#1#3} 

\crefname{proposition}{proposition}{propositions}
\crefformat{proposition}{#2proposition~#1#3} 
\Crefformat{proposition}{#2Proposition~#1#3} 

\crefname{propositionN}{proposition}{propositions}
\crefformat{propositionN}{#2proposition~#1#3} 
\Crefformat{propositionN}{#2Proposition~#1#3} 

\crefname{corollary}{corollary}{corollaries}
\crefformat{corollary}{#2corollary~#1#3} 
\Crefformat{corollary}{#2Corollary~#1#3} 

\crefname{corollaryN}{corollary}{corollaries}
\crefformat{corollaryN}{#2corollary~#1#3} 
\Crefformat{corollaryN}{#2Corollary~#1#3} 

\crefname{theorem}{theorem}{theorems}
\crefformat{theorem}{#2theorem~#1#3} 
\Crefformat{theorem}{#2Theorem~#1#3} 

\crefname{theoremN}{theorem}{theorems}
\crefformat{theoremN}{#2theorem~#1#3} 
\Crefformat{theoremN}{#2Theorem~#1#3} 

\crefname{enumi}{}{}
\crefformat{enumi}{#2#1#3}
\Crefformat{enumi}{#2#1#3}

\crefname{assumption}{assumption}{Assumptions}
\crefformat{assumption}{#2assumption~#1#3} 
\Crefformat{assumption}{#2Assumption~#1#3} 

\crefname{construction}{construction}{Constructions}
\crefformat{construction}{#2construction~#1#3} 
\Crefformat{construction}{#2Construction~#1#3} 

\crefname{sketch}{sketch}{Sketches}
\crefformat{sketch}{#2sketch~#1#3} 
\Crefformat{sketch}{#2Sketch~#1#3} 

\crefname{question}{question}{Questions}
\crefformat{question}{#2question~#1#3} 
\Crefformat{question}{#2Question~#1#3} 

\crefname{equation}{}{}
\crefformat{equation}{(#2#1#3)} 
\Crefformat{equation}{(#2#1#3)}

\numberwithin{equation}{section}

\theoremstyle{nonumberplain}
\theoremsymbol{\ensuremath{\blacksquare}}

\newtheorem{proof}{Proof}
\newcommand\pf[1]{\newtheorem{#1}{Proof of \Cref{#1}}}

\newcommand\bG{{\mathbb G}}

\newcommand\bK{{\mathbb K}}

\newcommand\bZ{{\mathbb Z}}

\newcommand\cC{{\mathcal C}}

\newcommand\cM{{\mathcal M}}

\newcommand\cO{{\mathcal O}}

\newcommand\cY{{\mathcal Y}}
\newcommand\cZ{{\mathcal Z}}

\newcommand\wt{\widetilde}

\DeclareMathOperator{\id}{id}

\DeclareMathOperator{\End}{\mathrm{End}}

\newcommand{\cat}[1]{\textsc{#1}}

\newcommand{\qedhere}{\mbox{}\hfill\ensuremath{\blacksquare}}

\newcommand{\comment}[1]{}

\newcommand{\xrightarrowdbl}[2][]{%
  \xrightarrow[#1]{#2}\mathrel{\mkern-14mu}\rightarrow
}

\title{Quantum Hopf rigidity in symmetric monoidal categories}
\author{Alexandru Chirvasitu}

\begin{document}

\date{}

\newcommand{\Addresses}{{
  \bigskip
  \footnotesize

  \textsc{Department of Mathematics, University at Buffalo}
  \par\nopagebreak
  \textsc{Buffalo, NY 14260-2900, USA}  
  \par\nopagebreak
  \textit{E-mail address}: \texttt{achirvas@buffalo.edu}

}}

\maketitle

\begin{abstract}
  We prove that a coaction of a Hopf algebra on another preserving the latter's associative-algebra structure and comultiplication automatically also preserves the counit, antipode, and tensor-interchange map and in fact factors through the coaction dual to an action of an affine group scheme by Hopf-algebra automorphisms. This generalizes and unifies a number of results to the effect that quantum groups have only classical symmetries, due to Kasprzak et al., Budzi\'nski-Kasprzak and Brannan et al.

  The stated automatic classicality follows from the fact that the antipode, counit and tensorand-interchange of a Hopf algebra internal to any symmetric monoidal category are contained in the (non-symmetric) monoidal subcategory generated by the associative-algebra structure and the comultiplication, assuming a weak form of arrow-retraction invariance.
\end{abstract}

\noindent \emph{Key words:
  Hopf algebra;
  Tannaka reconstruction;
  bialgebra;
  coaction;
  integral;
  quantum family;
  rigid monoidal category;
  symmetric monoidal
}

\vspace{.5cm}

\noindent{MSC 2020: 16T05; 18M80; 16T15; 18M05; 18D15; 18A25

  
}


\section*{Introduction}

The motivation for the present note is a cluster of results bearing a strong family resemblance, all to the effect that Hopf-like structures exhibit what one might call \emph{quantum rigidity}: coactions thereon by Hopf-algebra-type gadgets exhibit some (perhaps initially unexpected) commutativity, so that said coactions are in fact effectively actions by ordinary groups (discrete, algebraic, etc., depending on context). A brief sampling that will help convey the flavor might run as follows. 
\begin{enumerate}[(a),wide]
\item\label{item:intro.cqg.qaut.cls} Consider a \emph{compact quantum group} $\bG$ in the sense of \cite[Definition 1.1.1]{NeTu13} acting on a \emph{finite} quantum group $\bK$, so as to preserve the latter's structure: the representative-function algebra $\cO(\bK)$ is both a comodule-$*$-algebra and a comodule-$*$-coalgebra over $\cO(\bG)$. The action $\bG\circlearrowright \bK$ must then factor \cite[Corollary 3.3]{MR3305979} through that of a plain, classical group acting on $\bK$. Pithily: the \emph{quantum automorphism group} $\mathrm{qAut}(\bK)$ (introduced in \cite[Theorem 2.11]{MR3283728}) is in fact classical.

\item\label{item:intro.cqg.qaut.cls.bis} \cite[Corollary 7.2]{2608.14858v1} recovers this by pictorial techniques familiar from quantum-mechanics literature employing graphical Frobenius-algebra manipulation (e.g. \cite[\S\S 5, 6]{MR2503596}). 
  
\item\label{item:intro.qfam.qaut.cls} \cite[Theorem 3.4]{MR3576681} generalizes the result in the context of \emph{quantum families of quantum-group morphisms}: for Hopf algebras $H_i$, associative-algebra morphisms $H_1\to A\otimes H_2$ intertwining comultiplications in the appropriate sense factor through $A'\otimes H_2$ for commutative $A'\le A$.
\end{enumerate}

What ultimately drives the phenomenon is the possibility of recovering the tensorand-interchange map (henceforth \emph{flip}) $x\otimes y\mapsto y\otimes x$ on the tensor square of a Hopf algebra by operations available in the language of monoidal categories (arbitrary, \emph{not} symmetric) performed only on the associative (co)algebra structures. This is in evidence in essentially this form in \cite[Proposition 7.1]{2608.14858v1}, with the caveat that a somewhat richer structure is assumed there than will be present below (the monoidal categories in question are \emph{rigid}, \emph{dagger} \cite[\S 2]{zbMATH06236130}, etc.).

Concretely, one target result lying upstream of (so specializing to) the preceding instances is the following principle. 

\begin{theorem}\label{th:hopf.rec.flip}
  Let $H$ be a Hopf algebra internal to a symmetric monoidal category $(\cC,\otimes,\mathbf{1},\tau)$.

  The flip $\tau_{H,H}$, antipode $S=S_H$ and (co)unit $\varepsilon$, $\eta$ belong to the monoidal (non-symmetric) subcategory $\cC'\subseteq \cC$ generated by either $(\mu,\Delta, \eta)$ or $(\mu,\Delta, \varepsilon)$ and closed under arrow retraction in the following weak sense:
  \begin{equation}\label{eq:weak.rtrct}
    \forall\left(\text{arrows }\pi,s\in \cC,\ \pi s=\id\ \wedge\ s\pi\in \cC'\right)    
    \left(
      s\in \cC'
      \iff
      \pi\in \cC'
    \right)
  \end{equation}
\end{theorem}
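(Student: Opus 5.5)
The plan is to first get the antipode and the missing (co)unit into $\cC'$ using only the retraction closure \Cref{eq:weak.rtrct}, and then to write the flip $\tau_{H,H}$ as an explicit composite of $\mu$, $\Delta$ and $S$ that never permutes tensorands.

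\textbf{Inverses.} Take $\pi,s$ mutually inverse isomorphisms. Then $\pi s=\id$ and $s\pi=\id\in\cC'$, so \Cref{eq:weak.rtrct} says $s\in\cC'\iff\pi\in\cC'$. Hence $\cC'$ is closed under inverses of its isomorphisms. In particular it contains the inverse of the Galois map
\begin{equation*}
  \beta:=(\mu\otimes\id)(\id\otimes\Delta),\qquad x\otimes y\mapsto xy_1\otimes y_2 .
\end{equation*}
In $\cC$ this inverse is $\beta^{-1}=(\mu\otimes\id)(\id\otimes S\otimes\id)(\id\otimes\Delta)$, that is, $x\otimes y\mapsto xS(y_1)\otimes y_2$. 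This formula is only used to compute composites; membership in $\cC'$ comes from the retraction argument.

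\textbf{Both (co)units.} Since $\varepsilon\eta=\id_{\mathbf 1}$, we may apply \Cref{eq:weak.rtrct} to $s=\eta$, $\pi=\varepsilon$. It gives $\eta\in\cC'\iff\varepsilon\in\cC'$, provided $\eta\varepsilon\in\cC'$. So it suffices to show $\eta\varepsilon\in\cC'$ in each case.
\begin{itemize}
\item If $\eta\in\cC'$: $\mu\beta^{-1}(\eta\otimes\id)$ sends $y\mapsto S(y_1)y_2=\varepsilon(y)1$, so it equals $\eta\varepsilon$.
\item If $\varepsilon\in\cC'$: $(\id\otimes\varepsilon)\beta^{-1}\Delta$ sends $x\mapsto x_1S(x_2)\varepsilon(x_3)=\varepsilon(x)1$, so again $\eta\varepsilon\in\cC'$.
\end{itemize}
Either way $\eta,\varepsilon\in\cC'$.

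\textbf{Antipode.} We have $S=(\id\otimes\varepsilon)\beta^{-1}(\eta\otimes\id)$, since $y\mapsto S(y_1)\varepsilon(y_2)=S(y)$. Hence $S\in\cC'$.

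\textbf{The flip.} I would build it in two stages, checking each in Sweedler notation. Each Sweedler identity translates into a string-diagram equality using only the bialgebra axioms (including multiplicativity of $\Delta$, which is where $\tau$ enters the axioms but not our composite), coassociativity and the antipode axiom.

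First define $\Phi:H\otimes H\to H\otimes H$ as the composite of:
\begin{itemize}
\item $\Delta\otimes\id$, giving $x_1\otimes x_2\otimes y$;
\item $\id\otimes\mu$, giving $x_1\otimes x_2y$;
\item $\id\otimes\Delta$, giving $x_1\otimes x_2y_1\otimes x_3y_2$, by multiplicativity of $\Delta$;
\item $S\otimes\id\otimes\id$ followed by $\mu\otimes\id$, giving $S(x_1)x_2y_1\otimes x_3y_2=y_1\otimes xy_2$.
\end{itemize}
Every arrow used lies in $\cC'$, and $\Phi(x\otimes y)=y_1\otimes xy_2$.

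Then set
\begin{equation*}
  \tau':=(\id\otimes\mu)(\id\otimes\id\otimes S)(\Phi\otimes\id)(\id\otimes\Delta).
\end{equation*}
By coassociativity, $\tau'$ sends $x\otimes y\mapsto y_1\otimes xy_2S(y_3)=y\otimes x$. So $\tau'=\tau_{H,H}\in\cC'$.

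The main obstacle is rigor in this last identification. One must check that the Sweedler shorthand $\Delta(x_2y)=x_2y_1\otimes x_3y_2$ is legitimate: the bialgebra axiom hides a $\tau$ inside $\Delta\mu=(\mu\otimes\mu)(\id\otimes\tau\otimes\id)(\Delta\otimes\Delta)$. That $\tau$ appears only in verifying the equation $\tau'=\tau_{H,H}$ in $\cC$, not in the composite $\tau'$ itself. So I would write the verification carefully in graphical calculus, using naturality of $\tau$ to slide the crossing and then cancel via the antipode axiom.
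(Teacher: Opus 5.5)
Your proposal is correct and takes essentially the same route as the paper. You use closure under inverses to obtain the (mirror-image) Galois map's inverse, extract $\eta\varepsilon$ from it, get $\varepsilon$ from the retraction $\varepsilon\eta=\id$, then get $S$, and your $\tau'$ is exactly the paper's $(\mu\otimes\mu)(S\otimes\Delta\mu\otimes S)(\Delta\otimes\Delta)$ regrouped by functoriality of $\otimes$. The only difference is cosmetic: you handle the $(\mu,\Delta,\varepsilon)$ case directly via $(\id\otimes\varepsilon)\beta^{-1}\Delta=\eta\varepsilon$, whereas the paper reduces it to the other case by arrow reversal.
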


This is enough to recover the classicality results surveyed in \Cref{item:intro.cqg.qaut.cls}-\Cref{item:intro.qfam.qaut.cls} and substantially more: the quantum automorphism group of \emph{any} Hopf algebra, under any sensible construction one might put on the phrase, is classical.  

\begin{theorem}\label{th:cls.act.arb.halg}
  Let $(K,\mu,\eta,\Delta,\varepsilon,S)$ be a Hopf algebra and $B$ a bialgebra.  
  \begin{enumerate}[(1),wide]
  \item\label{item:th:cls.act.arb.halg:bialg} A coaction $K\xrightarrow{\rho} K\otimes B$ preserving both the unital associative-algebra structure and the comultiplication factors through a coaction by a commutative sub-bialgebra $B'\le B$ preserving the entire Hopf structure $(K,\mu,\eta,\Delta,\varepsilon,S)$.

  \item\label{item:th:cls.act.arb.halg:halg} If $B$ is Hopf, the coaction $\rho$ in \Cref{item:th:cls.act.arb.halg:bialg} factors through a commutative Hopf subalgebra $B'\le B$, so is the coaction dual to a Hopf-structure-preserving $\bG$-action on $K$ for an affine group scheme $\bG$.

  \item\label{item:th:cls.act.arb.halg:halg.act} An action $B\otimes K \to K$ preserving both the unital associative-algebra structure and the comultiplication factors through an action by a cocommutative quotient bialgebra $B\xrightarrowdbl{} B'$ preserving the entire Hopf structure $(K,\mu,\eta,\Delta,\varepsilon,S)$.
  \end{enumerate}
\end{theorem}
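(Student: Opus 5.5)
The plan is to deduce all three parts from \Cref{th:hopf.rec.flip}, applied in a suitable symmetric monoidal category whose arrows encode "compatibility with $\rho$".

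\textbf{Part \Cref{item:th:cls.act.arb.halg:bialg}.} Work in $\cC=\cat{Vect}$ and consider the subcategory $\cC'$ of arrows $f\colon K^{\otimes m}\to K^{\otimes n}$ that are $B$-colinear. Here $K^{\otimes m}$ carries the diagonal coaction
\[
  K^{\otimes m}\to K^{\otimes m}\otimes B ,\qquad
  x_1\otimes\cdots\otimes x_m\mapsto x_{1,(0)}\otimes\cdots\otimes x_{m,(0)}\otimes x_{1,(1)}\cdots x_{m,(1)},
\]
formed using the multiplication of $B$; for $m=0$ we take the trivial coaction on $\mathbf{1}$. Since $B$ is a bialgebra, the category of $B$-comodules is monoidal, so $\cC'$ is closed under composition and $\otimes$. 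The hypotheses on $\rho$ say exactly that $\mu$, $\eta$ and $\Delta$ lie in $\cC'$. For the weak retraction condition \Cref{eq:weak.rtrct}, suppose $\pi s=\id$ and $s\pi$ is colinear. If $s$ is colinear, then $\pi=\pi(s\pi)$ on the image of $s$, and I will check directly that $\rho\pi=(\pi\otimes\id)\rho$, using that $s\pi$ is an idempotent comodule map with image equal to $\mathrm{im}\,s$. The converse is handled in the same way. The subtle point is that $\mathrm{im}(s)$, with the coaction transported along $s$, is a subcomodule.

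Granting this, \Cref{th:hopf.rec.flip} gives that $\varepsilon$, $S$ and $\tau_{K,K}$ are all $B$-colinear. Colinearity of the flip reads
\[
  x_{(0)}\otimes y_{(0)}\otimes x_{(1)}y_{(1)} \;=\; x_{(0)}\otimes y_{(0)}\otimes y_{(1)}x_{(1)} .
\]
Apply dual functionals to the two $K$ tensorands, using the coefficient-span description of the subcoalgebra $B'\le B$ generated by the matrix coefficients of $\rho$. This shows that these coefficients commute pairwise. The subalgebra they generate is then a commutative sub-bialgebra through which $\rho$ factors, and colinearity of $\varepsilon$ and $S$ says that this coaction preserves the full Hopf structure.

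\textbf{Part \Cref{item:th:cls.act.arb.halg:halg}.} When $B$ is Hopf, enlarge $B'$ to the subalgebra generated by $S^k(B')$ for all $k$. Applying $S$, which is an anti-automorphism onto its image, to the commutation relations shows that these elements still commute. This subalgebra is a commutative Hopf subalgebra, i.e.\ $\cO(\bG)$ for an affine group scheme $\bG$. Dualizing the coaction then yields the $\bG$-action by Hopf automorphisms.

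\textbf{Part \Cref{item:th:cls.act.arb.halg:halg.act}.} Argue dually with $B$-module maps and the diagonal action. Flip-linearity gives that $b_{(1)}\otimes b_{(2)}-b_{(2)}\otimes b_{(1)}$ acts as zero on $K\otimes K$. The annihilator ideal of the action is a biideal, so quotienting by it (and by the flip-defect) gives a cocommutative quotient bialgebra $B'$ through which the action factors.

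The main obstacle I expect is verifying \Cref{eq:weak.rtrct} for the colinear subcategory, since a general $\pi$ need not be colinear even if $s\pi$ is. If direct verification fails, I would instead take $\cC'$ to be the smallest subcategory in the theorem and prove by induction on its generation that all its members are colinear. Retraction closure is the only nontrivial inductive step, and I would handle it using the idempotent $s\pi$ together with injectivity of $s$.
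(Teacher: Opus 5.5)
Your part (1) is sound and is essentially the paper's route (\Cref{cor:qtm.fam.maps} feeding \Cref{pr:com.subbialg}). The retraction condition you flagged as the main obstacle is actually immediate. From $(s\otimes\id)\rho\pi=\rho s\pi=(s\pi\otimes\id)\rho=(s\otimes\id)(\pi\otimes\id)\rho$, cancel the split mono $s\otimes\id$; for the converse, right-cancel the split epi $\pi$ (this is \Cref{le:auto.pres.inv.splt}).

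The genuine gap is in part (2). Applying the anti-homomorphism $S$ to $ab=ba$ only yields $S^k(a)S^k(b)=S^k(b)S^k(a)$, i.e.\ that each $S^k(B')$ is commutative. It says nothing about whether $a$ commutes with $S(b)$ or $S^2(b)$, and that cross-commutation is exactly what part (2) needs. It is not a formal consequence of $S$ being anti-multiplicative. The paper singles it out as ``the outstanding matter'': the Hopf envelope of a commutative bialgebra is commutative. It proves this by reducing to $\cO(M_n)$ and observing that the grouplike $\det$ must become invertible in any Hopf algebra, so everything factors through $\cO(GL_n)$. Your route can be repaired elementarily using the coalgebra structure. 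For $a,b\in B'$,
\[
aS(b)=S(b_{(1)})b_{(2)}aS(b_{(3)})=S(b_{(1)})ab_{(2)}S(b_{(3)})=S(b)a .
\]
Since $S$ is an anti-coalgebra map, the subalgebra generated by $B'\cup S(B')$ is again a commutative sub-bialgebra. Iterating and taking the union gives an $S$-stable commutative sub-bialgebra. (Incidentally, $S$ need not be injective, so ``anti-automorphism onto its image'' is inaccurate, though harmless.)

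Part (3) also contains a false step. $\mathrm{Ann}_B(K)$ is an ideal but generally not a coideal: $\Delta(b)\in\mathrm{Ann}_B(K)\otimes B+B\otimes\mathrm{Ann}_B(K)$ would force $b$ to kill $K\otimes K$ as well. For instance, let $B=\mathbb{C}\mathfrak{S}_3$ act on the group algebra $K=\mathbb{C}[(\bZ/2)^2]$ through $\mathfrak{S}_3\cong\mathrm{Aut}((\bZ/2)^2)$, which is an action by Hopf automorphisms. $K$ has no sign-isotypic component, but $K\otimes K$ contains a free $\mathfrak{S}_3$-orbit (the ordered bases). So the sign idempotent kills $K$ but not $K\otimes K$. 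Thus $B/\mathrm{Ann}_B(K)$ need not be a bialgebra. Also, ``quotienting by the flip-defect'' has no meaning as stated, since the defect is an element of $B\otimes B$. What works is $J:=\bigcap_{n\ge 0}\mathrm{Ann}_B(K^{\otimes n})$, the largest coideal inside $\mathrm{Ann}_B(K)$. Equivalently, $J$ is the kernel of the induced map into the cofree bialgebra on $B/\mathrm{Ann}_B(K)$, which is what the paper's $B_{coc}$ construction captures. $J$ is a biideal, and $B/J$ is cocommutative because every $\tau_{K^{\otimes m},K^{\otimes n}}$ is built from $\tau_{K,K}$ and hence is $B$-linear.
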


\subsection*{Acknowledgments}

I am grateful for illuminating exchanges with A. Agore, M. Brannan and D. Gromada. 


\section{Antipode, counit and flip recovery via (co)multiplication}\label{se:rec.ant.cnit.flp}

We assume some familiarity with the language of (symmetric) monoidal categories and closure thereof, as covered, say, in \cite[\S 6.1]{brcx_hndbk-2}. This and other relevant category-theoretic machinery will be sourced more precisely as needed. 

The focus is on Hopf algebras $(H,\mu,\eta,\Delta,\varepsilon)$ internal to such categories (the maps are the multiplication, unit, comultiplication and counit in this order); these are discussed (indeed, in broader braided contexts) in \cite[\S 5.2]{MR2793022}, for instance.

\pf{th:hopf.rec.flip}
\begin{th:hopf.rec.flip}
  Arrow reversal renders one variant superfluous, so we assume the morphisms $(\mu,\Delta,\eta)$ available.

  \begin{enumerate}[(I)]
  \item \textbf{: $\tau_{H,H}$, given the antipode $S$.} \cite[p. 775]{MR2793022} (see also \cite[Theorem]{MR1656075}) notes that $\tau_{H,H}$ is expressible as 
    \begin{equation*}
      \tau_{H,H}
      =
      (\mu\otimes \mu)
      \circ
      \left(S\otimes \Delta\mu\otimes S\right)
      \circ
      \left(\Delta\otimes \Delta\right),
    \end{equation*}
    so it suffices to recover the antipode $S\in \cC(H,H)$.

  \item \textbf{: $S$, given the counit $\varepsilon$.} \cite[Lemma 5.1(a)]{MR2793022}:
    \begin{equation*}
      S
      =
      (\varepsilon\otimes \id)
      \circ
      \bigg((\id\otimes \mu)(\Delta\otimes\id)\bigg)^{-1}
      \circ
      (\id\otimes\eta):
    \end{equation*}
    \Cref{eq:weak.rtrct} entails closure under arrow inversion, so that operation is indeed available. 
    
  \item \textbf{: Counit.} As the morphism
    \begin{equation*}
      \eta\varepsilon
      =
      \mu
      \circ
      \bigg((\id\otimes \mu)(\Delta\otimes\id)\bigg)^{-1}
      \circ
      (\id\otimes\eta)
    \end{equation*}
    is available, so too is $\varepsilon$ by \Cref{eq:weak.rtrct} applied to $\pi:=\varepsilon$ and $s:=\eta$.  \qedhere
  \end{enumerate}
\end{th:hopf.rec.flip}

A number of consequences follow: \cite[Theorem 3.4]{MR3576681}, in turn generalizing \cite[Corollary 3.3]{MR3305979}, is what \Cref{cor:qtm.fam.maps}'s first bullet point specializes to when internalizing to plain vector spaces; the second and third items, on the other hand, recover \cite[Theorems 3.5 and 3.6]{MR3576681} respectively.

\begin{notation}\label{not:mrg.as}
  For morphisms
  \begin{equation*}
    x_i
    \xmapsto{\quad f_i\quad}
    a\otimes y_i
    ,\quad
    \begin{gathered}
      1\le i\le n\\
      x_i,y_i,a\in \text{symmetric monoidal }(\cC,\otimes,\mathbf{1},\tau)\\
      a\text{ internal associative algebra}
    \end{gathered}
  \end{equation*}
  write
  \begin{equation*}
    \begin{tikzpicture}[>=stealth,auto,baseline=(current  bounding  box.center)]
      \path[anchor=base] 
      (0,0) node (l) {$x_1\otimes\cdots\otimes x_n$}
      +(-2,1.5) node (u) {$\left(x_1\otimes a\right)\otimes\cdots\otimes \left(x_n\otimes a\right)$}
      +(6,1.5) node (ur) {$x_1\otimes\cdots\otimes x_n\otimes a^{\otimes n}$}
      +(4,0) node (r) {$x_1\otimes\cdots\otimes x_n\otimes a$}
      ;
      \draw[->] (l) to[bend left=6] node[pos=.5,auto] {$\scriptstyle \otimes_i f_i$} (u);
      \draw[->] (u) to[bend left=6] node[pos=.5,auto] {$\scriptstyle \sigma$} (ur);
      \draw[->] (ur) to[bend left=6] node[pos=.3,auto] {$\scriptstyle \id\otimes \mu^{(n-1)}$} (r);
      \draw[->] (l) to[bend right=6] node[pos=.5,auto,swap] {$\scriptstyle \boxtimes_i f_i$} (r);      
    \end{tikzpicture}
  \end{equation*}
  with $a^{\otimes n}\xrightarrow{\mu^{(n-1)}}a$ denoting iterated multiplication for $n\ge 1$ and $a$'s unit for $n=0$, and $\sigma$ the \emph{shuffle} \cite[p. 248]{swe} moving all $a$ tensorands to the right by applying instances of the flip maps $\tau_{\bullet,\bullet}$, preserving the order of both the $x_i$s on the one hand and the $a$ tensorands on the other.
  
  Note also the obvious analogue for morphisms $\bullet\otimes c\to \bullet$, $c$ being an internal (coassociative, counital) coalgebra.
\end{notation}

Internalizing \cite[Definition 3.2]{MR3576681} to arbitrary symmetric monoidal categories:  

\begin{definition}\label{def:qfam}
  Let $H_i$, $i=1,2$ be two bialgebras in a symmetric monoidal category $(\cC,\otimes,\mathbf{1},\tau)$. For a (unital) associative algebra $A\in \cC$ an \emph{$A$-based quantum family of morphisms $H_1\to H_2$} is a unital-algebra morphism $H_1\to A\otimes H_2$ that intertwines the two comultiplications.
\end{definition}

A simple general remark will help check \Cref{eq:weak.rtrct} in \Cref{cor:qtm.fam.maps} below. Consider the diagram
\begin{equation*}
  \begin{tikzpicture}[>=stealth,auto,baseline=(current  bounding  box.center)]
    \path[anchor=base] 
    (0,0) node (l) {$\cY_d$}
    +(2,.5) node (u) {$\cY$}
    +(6,0) node (r) {$\cZ$}
    +(3,-.8) node (alpha) {$\scriptstyle \alpha\Downarrow$}
    ;
    \draw[right hook->] (l) to[bend left=6] node[pos=.5,auto] {$\scriptstyle \iota $} (u);
    \draw[->] (u) to[bend left=20] node[pos=.5,auto] {$\scriptstyle F_1$} (r);
    \draw[->] (u) to[bend left=10] node[pos=.5,auto,swap] {$\scriptstyle F_2$} (r);
    \draw[->] (l) to[bend right=15] node[pos=.5,auto] {$\scriptstyle F_1\iota$} (r);
    \draw[->] (l) to[bend right=35] node[pos=.5,auto,swap] {$\scriptstyle F_2\iota$} (r);
  \end{tikzpicture}
\end{equation*}
of functors and natural transformations, with the ``$d$'' subscript denoting the \emph{discretization} of a category: its identity morphisms only. This provides
\begin{itemize}[wide]
\item functors
  \begin{equation*}
    \overrightarrow{\cY}
    \ni
    f\xmapsto{\quad\wt{F}_1\quad}
    F_1(\mathrm{dom}~f)
    \in \cZ
    ,\quad
    \overrightarrow{\cY}
    \ni
    f\xmapsto{\quad\wt{F}_2\quad}
    F_2(\mathrm{codom}~f)
    \in \cZ
  \end{equation*}
  on the \emph{arrow category} of $\cY$;

\item two natural transformations
  \begin{equation*}
    \begin{gathered}
      f
      \xmapsto{\quad\alpha_2\quad}
      \left(
        F_1(\mathrm{dom}~f)
        \xrightarrow{\quad\alpha_{\mathrm{codom}~f}\circ F_1f\quad}
        F_2(\mathrm{codom}~f)
      \right)\\
      f
      \xmapsto{\quad\alpha_1\quad}
      \left(
        F_1(\mathrm{dom}~f)
        \xrightarrow{\quad F_2f\circ \alpha_{\mathrm{dom}~f}\quad}
        F_2(\mathrm{codom}~f)
      \right)
    \end{gathered}    
  \end{equation*}
  between those functors' restrictions to the discretization $\left(\overrightarrow{\cY}\right)_d$;
  
\item and hence the associated \emph{equifier} \cite[Lemma 2.76]{ar} $\mathrm{Eq}\left(\alpha_{i=1,2}\right)\subseteq\left(\overrightarrow{\cY}\right)_d$: the full subcategory where the two natural transformations agree. 
\end{itemize}
The observation alluded to preceding this background is as follows.

\begin{lemma}\label{le:auto.pres.inv.splt}
  The subcategory $\mathrm{Eq}(\alpha_i)\subseteq \left(\overrightarrow{\cY}\right)_d$ satisfies \Cref{eq:weak.rtrct}. 
\end{lemma}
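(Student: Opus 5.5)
The plan is to unwind the definition of $\mathrm{Eq}(\alpha_i)$ and then check both directions of \Cref{eq:weak.rtrct} by a short diagram chase. An object of $\left(\overrightarrow{\cY}\right)_d$ is an arrow $f:y\to x$ of $\cY$, and $f\in\mathrm{Eq}(\alpha_i)$ exactly when $\alpha_1(f)=\alpha_2(f)$, that is, when the naturality square
\begin{equation*}
  \alpha_x\circ F_1f=F_2f\circ\alpha_y
\end{equation*}
commutes. So $\mathrm{Eq}(\alpha_i)$ is the class of arrows of $\cY$ on which $\alpha$ is natural; it is closed under composition and contains identities. The claim then reads: if $\pi s=\id$ and $\alpha$ is natural on $s\pi$, then $\alpha$ is natural on $s$ if and only if it is natural on $\pi$.

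Fix $s:x\to y$ and $\pi:y\to x$ with $\pi s=\id_x$, and assume the square for $s\pi$ commutes:
\begin{equation*}
  \alpha_y\circ F_1(s\pi)=F_2(s\pi)\circ\alpha_y .
\end{equation*}
The only tools needed are functoriality of $F_1,F_2$, the identity $\pi s=\id$ (inserted as $F_2\pi\circ F_2s$ or as $F_1\pi\circ F_1s$), and this hypothesis.

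\textbf{From $s$ to $\pi$.} Assume $\alpha_y F_1s=F_2s\,\alpha_x$. I would compute
\begin{equation*}
  \alpha_xF_1\pi
  =F_2\pi\,F_2s\,\alpha_xF_1\pi
  =F_2\pi\,\alpha_yF_1s\,F_1\pi
  =F_2\pi\,\alpha_yF_1(s\pi).
\end{equation*}
The hypothesis on $s\pi$ turns the last expression into
\begin{equation*}
  F_2\pi\,F_2(s\pi)\,\alpha_y=F_2(\pi s\pi)\,\alpha_y=F_2\pi\,\alpha_y .
\end{equation*}
So $\pi\in\mathrm{Eq}(\alpha_i)$.

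\textbf{From $\pi$ to $s$.} Assume $\alpha_xF_1\pi=F_2\pi\,\alpha_y$. Dually, I would compute
\begin{equation*}
  F_2s\,\alpha_x
  =F_2s\,\alpha_xF_1\pi\,F_1s
  =F_2s\,F_2\pi\,\alpha_yF_1s
  =F_2(s\pi)\,\alpha_yF_1s
  =\alpha_yF_1(s\pi)F_1s
  =\alpha_yF_1s .
\end{equation*}
So $s\in\mathrm{Eq}(\alpha_i)$.

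There is no real obstacle here; the calculation is mechanical. The only care needed is in identifying $\mathrm{Eq}(\alpha_i)$, which is nominally a subcategory of the discrete category $\left(\overrightarrow{\cY}\right)_d$, with a class of arrows of $\cY$. Condition \Cref{eq:weak.rtrct} has to be read for that class of arrows, and I would state this reading explicitly at the start. I would also note that the hypothesis on $s\pi$ is used exactly once in each direction, which is why the weak form of retraction-invariance suffices.
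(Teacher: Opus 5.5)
Your proof is correct and is essentially the paper's argument. The paper proves the direction $s\Rightarrow\pi$ with a cube whose top face is the square for $s\pi$, then cancels the split monomorphism $F_2 s$; your insertion of $F_2\pi\,F_2 s=\id$ does the same cancellation algebraically. You also write out the dual direction $\pi\Rightarrow s$, using that $F_1\pi$ is split epi, which the paper leaves as the "parallel case."
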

\begin{proof}
  $\alpha$ simply provides a family of morphisms $F_1 y\to F_2 y$ for objects $y\in \cY$, and $\mathrm{Eq}:=\mathrm{Eq}(\alpha_i)$ is by construction the collection of arrows on which $\alpha$ is natural. Being closed under composition (and containing all identity morphisms), $\mathrm{Eq}$ constitutes the largest subcategory of $\cY$ on which $\alpha$ operates as a natural transformation between the $F_i$. We argue one of the two parallel cases, namely the implication $s\Rightarrow\pi$ (with $y\xrightarrow{\pi}y'$, say).
  
  Consider, to that end, the diagram
  \begin{equation*}
    \begin{tikzpicture}[>=stealth,auto,baseline=(current bounding box.center)]

      \def\depthax{.15}
      \def\depthay{-.45}
      \def\depthbx{-.15}
      \def\depthby{.45}

      \path[anchor=base]
      (0,0)      node (l)  {$F_1y$}
      +(5,.5)    node (u)  {$F_1y$}
      +(4,-.5)   node (d)  {$F_2y$}
      +(8,0)     node (r)  {$F_2y$}
      +(1,-2)  node (lp) {$F_1y$}
      +(6,-1.5)   node (up) {$F_1 y'$}
      +(5,-2.5)  node (dp) {$F_2y$}
      +(9,-2)  node (rp) {$F_2 y'$}
      ;

      \path[name path=u-up]
      (u) .. controls
      +(\depthax,\depthay)
      and +(\depthbx,\depthby)
      .. (up);

      \path[name path=d-r]
      (d) to[bend right=6] (r);

      \path[name path=lp-up]
      (lp) to[bend left=6] (up);

      \path[name path=d-dp]
      (d) .. controls
      +(\depthax,\depthay)
      and +(\depthbx,\depthby)
      .. (dp);

      \path[name intersections={of=u-up and d-r,   by=xupper}];
      \path[name intersections={of=lp-up and d-dp, by=xlower}];

      \draw[<-]
      (u) .. controls
      +(\depthax,\depthay)
      and +(\depthbx,\depthby)
      .. node[pos=.55,right,xshift=5pt,yshift=-4pt] {$\scriptstyle F_1 s$} (up);

      \draw[->]
      (lp) to[bend left=6]
      node[pos=.3,right,xshift=-4pt,yshift=8pt] {$\scriptstyle F_1\pi$} (up);

      \draw[<-]
      (l) .. controls
      +(\depthax,\depthay)
      and +(\depthbx,\depthby)
      .. node[pos=.55,left,xshift=1pt,yshift=1pt] {$\scriptstyle \id$} (lp);

      \draw[<-]
      (r) .. controls
      +(\depthax,\depthay)
      and +(\depthbx,\depthby)
      .. node[pos=.55,right,xshift=1pt,yshift=1pt] {$\scriptstyle  F_2 s$} (rp);

      \draw[->] (l) to[bend left=6]
      node[pos=.5,auto] {$\scriptstyle F_1 (s\pi)$} (u);

      \draw[->] (u) to[bend left=6]
      node[pos=.5,auto] {$\scriptstyle \alpha$} (r);

      \draw[->] (l) to[bend right=6]
      node[pos=.5,auto,swap] {$\scriptstyle \alpha$} (d);

      \fill[white] (xupper) circle[radius=1.6pt];

      \draw[->] (d) to[bend right=6]
      node[pos=.7,auto,swap] {$\scriptstyle  F_2(s\pi)$} (r);

      \draw[->] (up) to[bend left=6]
      node[pos=.5,auto] {$\scriptstyle $} (rp);

      \draw[->] (lp) to[bend right=6]
      node[pos=.5,auto,swap] {$\scriptstyle \alpha$} (dp);

      \draw[->] (dp) to[bend right=6]
      node[pos=.5,auto,swap] {$\scriptstyle  F_2\pi$} (rp);

      \fill[white] (xlower) circle[radius=1.6pt];

      \draw[<-]
      (d) .. controls
      +(\depthax,\depthay)
      and +(\depthbx,\depthby)
      .. node[pos=.55,right,xshift=3pt,yshift=-6pt] {$\scriptstyle \id$} (dp);

    \end{tikzpicture}
  \end{equation*}
  The top and vertical faces commute by hypothesis, so the bottom must also: $F_2$ preserves the left invertibility of the morphism $s$, hence its monic character.
\end{proof}

\begin{corollary}\label{cor:qtm.fam.maps}
  Let $H_{1,2}$ be two Hopf algebras and $A$ a unital associative algebra. A quantum family of morphisms $H_1\xrightarrow{\alpha}A\otimes H_2$ automatically intertwines
  \begin{itemize}[wide]
  \item flips, in the sense that
    \begin{equation*}
      \begin{aligned}
        \rho
        \alpha^{\otimes 2}
        \tau_{H_1,H_1}
        &=
          \left(\id_A\otimes \tau_{H_2,H_2}\right)
          \rho
          \alpha^{\otimes 2}\\
        \rho
        &:=
          \left(\mu_A\otimes \id_{H_2^{\otimes 2}}\right)
          \left(\id_A\otimes \tau_{H_2,A}\otimes \id_{H_2}\right);
      \end{aligned}
    \end{equation*}
    
  \item counits;
    
  \item and antipodes. 
  \end{itemize}  
\end{corollary}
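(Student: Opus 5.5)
The plan is to realize the quantum family $\alpha$ as a natural family of morphisms between two monoidal functors, so that \Cref{le:auto.pres.inv.splt} and \Cref{th:hopf.rec.flip} do all the work.

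\textbf{Setting up the functors.} Let $\cY$ be the free symmetric monoidal category on one Hopf algebra object $h$; more concretely, the category whose objects are the words $h^{\otimes n}$ and whose arrows are generated by the Hopf structure maps and the flips. Both $H_1$ and $H_2$ are Hopf algebras in $\cC$, so we have symmetric monoidal functors $F_1,F_2\colon\cY\to\cC$ sending $h$ to $H_1$ and $H_2$ respectively. To absorb $A$, replace $F_2$ by the functor $h^{\otimes n}\mapsto A\otimes H_2^{\otimes n}$, with $f\mapsto \id_A\otimes F_2f$. Define components
\begin{equation*}
  \alpha_{h^{\otimes n}}:=\boxtimes_{i=1}^n\alpha\colon H_1^{\otimes n}\to A\otimes H_2^{\otimes n},
\end{equation*}
using \Cref{not:mrg.as}. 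For $n=0$ this is the unit $\eta_A$. For $n=2$ it is exactly $\rho\alpha^{\otimes 2}$ as in the statement.

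\textbf{Which arrows are in the equifier.} Let $\mathrm{Eq}\subseteq\cY$ be the subcategory of arrows on which this family is natural. Closure under composition is immediate. Closure under $\otimes$ needs care: for $f\otimes g$, naturality follows from naturality on $f$ and on $g$ together with associativity of $\mu_A$ and coherence of the shuffles. This does not require commutativity of $A$, because $f$ acts only on the first block of $H$-tensorands and $g$ on the second, so the $A$-factors are never reordered. Hence $\mathrm{Eq}$ is a monoidal (non-symmetric) subcategory.

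We then check the generators. Naturality on $\mu$ is multiplicativity of $\alpha$. Naturality on $\eta$ is unitality. Naturality on $\Delta$ is precisely the intertwining condition of \Cref{def:qfam}. By \Cref{le:auto.pres.inv.splt}, $\mathrm{Eq}$ also satisfies \Cref{eq:weak.rtrct}. Applying \Cref{th:hopf.rec.flip} inside $\cY$ to the Hopf algebra $h$, we conclude that $\tau_{h,h}$, $S$ and $\varepsilon$ all lie in $\mathrm{Eq}$. Unwinding what naturality means for each of these arrows gives the three bullet points. For $\varepsilon$, the statement is $(\id_A\otimes\varepsilon_{H_2})\alpha=\eta_A\varepsilon_{H_1}$.

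\textbf{Main obstacle.} I expect the delicate step to be applicability of \Cref{le:auto.pres.inv.splt}. That lemma needs $\mathrm{Eq}$ to be described via genuine functors $F_1,F_2$ on $\cY$, with $\alpha$ merely a family of components. It also uses that $F_2$ preserves the relation $\pi s=\id$, which holds because $F_2$ is a functor. I will therefore verify carefully that the modified target functor $\id_A\otimes F_2(-)$ is indeed a functor; it is. I will also check that the inversion used in step (II) of the proof of \Cref{th:hopf.rec.flip} is covered by the retraction closure, since an isomorphism and its inverse form a retraction pair.

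A secondary point is working in the free $\cY$ rather than in $\cC$ directly. In $\cC$ the two Hopf algebras differ, so "the subcategory generated by $\mu,\Delta,\eta$" only makes sense in a universal setting where the arrows of $H_1$ and $H_2$ correspond. The free category provides this, and the identities that \Cref{th:hopf.rec.flip} uses hold in any Hopf algebra, hence in $\cY$.
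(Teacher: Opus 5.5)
Your proposal is correct and follows the paper's proof essentially verbatim. The ingredients match: the free symmetric monoidal category on a Hopf object, the functors $F_1$ and $A\otimes F_2(-)$ with components $\alpha^{\boxtimes n}$, \Cref{le:auto.pres.inv.splt} to secure \Cref{eq:weak.rtrct} for the equifier, and then \Cref{th:hopf.rec.flip}. You also fill in details the paper leaves implicit, namely closure of the equifier under $\otimes$ and the checks on $\mu,\eta,\Delta$. One small correction there: the $\otimes$-closure really rests on naturality of the symmetry $\tau_{H_2^{\otimes m},A}$ with respect to $F_2 f$, not merely on coherence of the shuffles.
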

\begin{proof}
  Consider the symmetric monoidal category $\cC$ freely generated by a Hopf object $(H,\mu,\eta,\Delta,\varepsilon)\in \cC$ \cite[Example 1.1]{MR4861254}, equipped with its resulting symmetric monoidal functors $\cC\xrightarrow{F_i}\cat{Vect}$ respectively sending $H$ to $H_i$, $i=1,2$.

  The aim is to apply \Cref{th:hopf.rec.flip} to the subcategory $\cC'\subseteq \cC$ consisting of those morphisms $f\in \cC\left(H^{\otimes m},H^{\otimes n}\right)$ for which $\alpha$ intertwines $F_i f$ in the sense that
  \begin{equation*}
    \begin{tikzpicture}[>=stealth,auto,baseline=(current  bounding  box.center)]
      \path[anchor=base] 
      (0,0) node (l) {$H_1^{\otimes m}$}
      +(3,.5) node (u) {$H_1^{\otimes n}$}
      +(3,-.5) node (d) {$A\otimes H_2^{\otimes m}$}
      +(6,0) node (r) {$A\otimes H_2^{\otimes n}$}
      ;
      \draw[->] (l) to[bend left=6] node[pos=.5,auto] {$\scriptstyle F_1 f$} (u);
      \draw[->] (u) to[bend left=6] node[pos=.5,auto] {$\scriptstyle \alpha^{\boxtimes n}$} (r);
      \draw[->] (l) to[bend right=6] node[pos=.5,auto,swap] {$\scriptstyle \alpha^{\boxtimes m}$} (d);
      \draw[->] (d) to[bend right=6] node[pos=.5,auto,swap] {$\scriptstyle \id_A\otimes F_2 f$} (r);
    \end{tikzpicture}
  \end{equation*}
  commutes ($\alpha^{\boxtimes \bullet}$ as in \Cref{not:mrg.as}). All this requires is verifying \Cref{eq:weak.rtrct}, which follows from \Cref{le:auto.pres.inv.splt} with
  \begin{itemize}[wide]
  \item our $F_1$ and $A\otimes F_2$ respectively in place of \Cref{le:auto.pres.inv.splt}'s $F_{1,2}$;    
  \item and $\alpha$ extended to all objects $H^{\otimes n}$ by means of \Cref{not:mrg.as}'s $\alpha^{\boxtimes\bullet}$ construct. 
  \end{itemize}
\end{proof}

\Cref{th:cls.act.arb.halg}\Cref{item:th:cls.act.arb.halg:bialg} follows:

\begin{proposition}\label{pr:com.subbialg}
  Let $K$ and $B$ be a Hopf algebra and bialgebra respectively. 
  \begin{enumerate}[(1),wide]
  \item\label{item:pr:com.subbialg:coact} A coaction $K\xrightarrow{\rho} K\otimes B$ preserving both the unital associative-algebra structure and the comultiplication factors through a Hopf-structure-preserving coaction by a commutative sub-bialgebra $B'\subseteq B$.

  \item\label{item:pr:com.subbialg:act} An action $B\otimes K\to K$ preserving both the unital associative-algebra structure and the comultiplication factors through a Hopf-structure-preserving action by a cocommutative quotient bialgebra $B\xrightarrowdbl{}B'$.
  \end{enumerate}
\end{proposition}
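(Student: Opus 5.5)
Part \Cref{item:pr:com.subbialg:coact} comes from \Cref{cor:qtm.fam.maps} with $H_1=H_2:=K$ and $A:=B$, composed with the flip $K\otimes B\cong B\otimes K$. The plan is to first obtain preservation of the full Hopf structure, and then show commutativity of a suitable sub-bialgebra. A coaction $\rho$ preserving the unital algebra structure and the comultiplication is precisely a $B$-based quantum family of morphisms $K\to B\otimes K$ in the sense of \Cref{def:qfam}. \Cref{cor:qtm.fam.maps} therefore says that $\rho$ intertwines counits, antipodes and flips.

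Next I take $B'\subseteq B$ to be the \emph{coefficient} subspace of $\rho$: the smallest subspace such that $\rho(K)\subseteq K\otimes B'$. Concretely, it is spanned by the elements $(\varphi\otimes\id)\rho(x)$ for $x\in K$ and $\varphi\in K^*$. The steps are as follows.
\begin{enumerate}[(a)]
\item Coassociativity of the coaction gives $\Delta_B(B')\subseteq B'\otimes B'$, and counitality gives $\varepsilon_B$ on $B'$. So $B'$ is a subcoalgebra.
\item Multiplicativity of $\rho$ gives $B'B'\subseteq B'$, and unitality gives $1\in B'$ via $\rho(1)=1\otimes 1$. So $B'$ is a sub-bialgebra.
\item Commutativity is the main point. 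Flip-intertwining reads, in Sweedler notation with $\rho(x)=x_{(0)}\otimes x_{(1)}$,
  \begin{equation*}
    y_{(0)}\otimes x_{(0)}\otimes x_{(1)}y_{(1)}
    =
    y_{(0)}\otimes x_{(0)}\otimes y_{(1)}x_{(1)}.
  \end{equation*}
  I apply $\psi\otimes\varphi\otimes\id$ for arbitrary functionals $\psi,\varphi\in K^*$. This yields $b c=c b$ for all spanning elements $b=(\varphi\otimes\id)\rho(x)$ and $c=(\psi\otimes\id)\rho(y)$. Hence $B'$ is commutative. Since $B'$ is generated by these elements as a bialgebra, it is even spanned by them.
\end{enumerate}
Finally, the preservation of $\varepsilon$ and $S$ obtained above is a statement about $\rho$ itself. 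It therefore persists when $\rho$ is regarded as a coaction by $B'$.

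Part \Cref{item:pr:com.subbialg:act} is dual. The plan is to apply \Cref{th:hopf.rec.flip} via the same equifier device (\Cref{le:auto.pres.inv.splt}), now with the merging construct for modules over $B$ from the coalgebra analogue in \Cref{not:mrg.as}. This shows the action intertwines counit, antipode and flips. Then I let $I\subseteq B$ be the annihilator of the action, i.e.\ the kernel of $B\to\End(K)$, and set $B':=B/I$.
\begin{enumerate}[(a)]
\item $I$ is a two-sided ideal.
\item $I$ is a coideal, because $K$ is a module-coalgebra and module-algebra (the argument dual to step (a) above). Hence $B'$ is a quotient bialgebra.
\item Flip compatibility gives
  \begin{equation*}
    b_{(1)}x\otimes b_{(2)}y=b_{(2)}x\otimes b_{(1)}y .
  \end{equation*}
  This says $\Delta(b)-\Delta^{\mathrm{op}}(b)$ acts as zero on $K\otimes K$, i.e.\ it lies in the kernel of $B\otimes B\to\End(K)\otimes\End(K)$. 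That kernel is $I\otimes B+B\otimes I$, so $B'$ is cocommutative.
\end{enumerate}

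The main obstacle I expect is in this last step, together with checking that \Cref{cor:qtm.fam.maps} genuinely dualizes to actions. For the corollary, one has to set up the equifier with $F_2$ replaced by the $B^*$-valued or module-valued analogue; the natural choice is $\cC(B\otimes-,-)$-type transformations built with $\boxtimes$. For the cocommutativity step, one needs the identification of the kernel of $B\otimes B\to\End(K)^{\otimes2}$ with $I\otimes B+B\otimes I$, which is linear algebra over a field.
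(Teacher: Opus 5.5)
Your overall route is the paper's. You get flip-intertwining from \Cref{cor:qtm.fam.maps}, and its dual does go through via \Cref{le:auto.pres.inv.splt} with $F_1=B\otimes F(-)$, $F_2=F(-)$ for the functor $F$ classifying $K$ and $\alpha$ the diagonal actions. You then read off (co)commutativity from the Sweedler identity in your step (c), which is correct. The gap is in step (b) of both parts: you assume the coefficient space, respectively the annihilator, is already a sub-, respectively quotient, bialgebra, and neither holds in general.

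\textbf{Part (1), step (b).} Multiplicativity of $\rho$ only makes $\mu_K\colon K\otimes K\to K$ a comodule map. That yields $C\subseteq C\cdot C$ for the coefficient coalgebra $C$, not $C\cdot C\subseteq C$. Concretely, take $B=\Bbbk[t^{\pm1}]$ with $t$ grouplike and the grading coaction $\rho(g)=g\otimes 1$, $\rho(x)=x\otimes t$ on the $H_{2,-1}$ of \Cref{re:int.not.fixed}. It preserves the algebra structure and $\Delta$, yet $C=\Bbbk 1+\Bbbk t$ and $t^2\notin C$. Your closing sentence, that $B'$ is spanned by the coefficients, rests on this false step. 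The repair is to let $B'$ be the subalgebra generated by $C$. It is a subcoalgebra because $C$ is one and $\Delta_B$ is multiplicative. It is commutative because its generators commute pairwise by your (c). This is exactly the paper's $B'$, the image of $B_c(C)\to B$; that map exists precisely because the elements of $C$ commute pairwise.

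\textbf{Part (2), step (b).} This fails dually. For $b\in I$, the module-coalgebra condition only says that $\Delta_B(b)$ kills $\Delta_K(K)\subseteq K\otimes K$. The module-algebra condition only says that $\mu_K\circ(\Delta_B(b)\cdot -)=0$. Neither forces $\Delta_B(b)$ to kill all of $K\otimes K$. For a counterexample, take $B=\Bbbk[h]$ with $h$ primitive, acting on $H_{2,-1}$ by the degree derivation: $h$ acts as $0$ on $1,g$ and as $1$ on $x,gx$. This preserves the algebra structure and $\Delta$, and $I=(h^2-h)$. But $\Delta_B(h^2-h)$ acts on $x\otimes x$ as $2^2-2=2\neq 0$, so $B/I$ is not a bialgebra, and your (c) is then a statement about a non-bialgebra.

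The repair is to use the annihilator $J\subseteq I$ of $T:=\bigoplus_{n\ge 0}K^{\otimes n}$ with the diagonal actions, where $K^{\otimes 0}=\Bbbk$ is acted on via $\varepsilon_B$. Then $J$ is an ideal and $\varepsilon_B(J)=0$. Since $T\otimes T$ is again a sum of tensor powers of $K$, and $\End(T)\otimes\End(T)\to\End(T\otimes T)$ is injective, you get $\Delta_B(J)\subseteq J\otimes B+B\otimes J$. Now run (c) with all the flips $\tau_{K^{\otimes m},K^{\otimes n}}$; these are composites of $\id\otimes\tau_{K,K}\otimes\id$'s, hence intertwined. This gives $\Delta_B-\Delta_B^{\mathrm{op}}\equiv 0$ modulo $J\otimes B+B\otimes J$, so $B':=B/J$ works. $J$ is the largest coideal inside $I$. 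The paper obtains it as the kernel of the lift $B\to B_{coc}(A)$ of $B\twoheadrightarrow A$ into the cofree cocommutative bialgebra.
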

\begin{proof}
  \begin{enumerate}[label={},wide]
  \item\textbf{\Cref{item:pr:com.subbialg:coact}} Per \Cref{cor:qtm.fam.maps}, the \emph{coefficient coalgebra}
    \begin{equation*}
      C:=
      \left\{\left(\id\otimes f\right)\rho(k)\ :\ k\in K,\ f\in B^*\right\}
      \lhook\joinrel\xrightarrow[\quad\text{inclusion}\quad]{\quad\iota\quad}
      B
    \end{equation*}
    of the coaction is commutative under $B$'s multiplication:
    \begin{equation}\label{eq:c2b.comm}
      \begin{tikzpicture}[>=stealth,auto,baseline=(current  bounding  box.center)]
        \path[anchor=base] 
        (0,0) node (l) {$C\otimes C$}
        +(2,.5) node (u) {$B\otimes B$}
        +(4,0) node (r) {$B$}
        ;
        \draw[->] (l) to[bend left=6] node[pos=.5,auto] {$\scriptstyle \iota^{\otimes 2}$} (u);
        \draw[->] (u) to[bend left=6] node[pos=.5,auto] {$\scriptstyle \mu_B\text{ or }\mu_B\circ\tau_{B,B}$} (r);
        \draw[->] (l) to[bend right=6] node[pos=.5,auto,swap] {$\scriptstyle $} (r);
      \end{tikzpicture}
    \end{equation}
    commutes for a single common bottom arrow, regardless of the right-hand top-arrow choice. The coaction thus factors through one by the \emph{free commutative bialgebra} \cite[Theorem 5.3.6]{rad} $C\to B_{c}(C)$ on $C$ along a bialgebra morphism $B_{c}(C)\to B$ factoring $C\le B$, and we can take for $B'\le B$ the image of that morphism. 

  \item\textbf{\Cref{item:pr:com.subbialg:act}} Much of the preceding argument simply dualizes categorically: the image $B\xrightarrowdbl{}A$ of the morphism $B\to \End(K)$ is \emph{co}commutative under $\Delta_B$ in the sense that the diagram dual to \Cref{eq:c2b.comm} commutes, so the action must factor via $B\to B_{coc}(A)\to A$ through the \emph{cofree cocommutative bialgebra} \cite[Theorem 5.5.3]{rad} on $A$; the sought-after quotient is the image of $B\to B_{coc}(A)$. 
  \end{enumerate}
\end{proof}

\pf{th:cls.act.arb.halg}
\begin{th:cls.act.arb.halg}
  The outstanding matter is the Hopf case in \Cref{item:th:cls.act.arb.halg:halg}. That will follow from the earlier bialgebra version provided we argue that the \emph{free Hopf algebra} $B\to H(B)$ (\cite[Theorem 2.6.3]{par_qg-ncg}, \cite[Theorem 7 2.]{porst_formal-2}) on a commutative bialgebra $B$ is again commutative. This is \cite[Proposition 3.14]{MR5016690}; we provide an alternative proof for completeness. 

  $B$ being a union \cite[Theorem 2.2.3]{rad} of finite-dimensional subcoalgebras which in turn are quotients of \emph{matrix} coalgebras $M_n(\Bbbk)^*$, it will suffice to argue the point for the \emph{free commutative bialgebra} $B:=S\left(M_n(\Bbbk)^*\right)$ of \cite[Definition 5.3.7]{rad}.

  Indeed: a morphism $B\to H$ into a Hopf algebra will make the defining $n$-dimensional $B$-comodule $V$ into an $H$-comodule, with the flip on $V^{\otimes 2}$ being an $H$-comodule morphism. This gives a 1-dimensional comodule $\bigwedge^n V\in \cM^H$ (top exterior power), which must be invertible ($H$ being Hopf). This suffices to conclude: $B\to H$ factors through $B\to \cO(GL_n)$, the regular-function algebra on the affine group scheme $GL_n$ operating on $V$. 
\end{th:cls.act.arb.halg}

\Cref{th:cls.act.arb.halg} recovers the classical nature of the quantum automorphism group of a finite quantum group (so finite-dimensional $K$). There, all Hopf algebras in sight are \emph{CQG} \cite[Definition 2.2]{dk_cqg}): cosemisimple complex $*$-algebras with additional positivity constraints imposed on the unique \cite[Exercise 5.5.9]{dnr} unital (left and right) \emph{Haar integral}. That quantum rigidity result is proven in \cite[Corollary 3.3]{MR3305979} and again via diagrammatic calculus in \cite[Proposition 7.1 and Corollary 7.2]{2608.14858v1}.

\begin{remarks}\label{res:coprod.coc.coalg.0coc}
  \begin{enumerate}[(1),wide]
  \item In amplifying \Cref{th:cls.act.arb.halg}'s item \Cref{item:th:cls.act.arb.halg:bialg} to \Cref{item:th:cls.act.arb.halg:halg} we appealed to the automatic commutativity of the \emph{Hopf reflection} of a commutative bialgebra. \cite[p. 686]{MR5016690} first paragraph asks whether the dual version holds: is the \emph{cofree Hopf algebra} (one exists: \cite[diagram (9)]{porst_formal-2}) $H_{\ell}(B)\to B$ on a cocommutative bialgebra again cocommutative?

    \cite[Remark 26]{porst_formal-2} claims an affirmative answer to that question, but the argument appears to be flawed for the same reasons as those pointed out in \cite[Remark 3.15]{MR5016690}: the concrete construction of $H_{\ell}(B)$ involves \cite[proof of Theorem 3.1]{zbMATH05696924} products in the category of bialgebras (equivalently, coalgebras), and those need not (will not, generally: \Cref{ex:coprod.coc.0coc}) preserve cocommutativity.

  \item In reference to the preceding point, note nevertheless that cocommutative bialgebras do have \emph{largest} Hopf subalgebras:    
    
    The coproduct $H$ of all Hopf subalgebras $\le B$ for cocommutative $B$ in the category of Hopf algebras (equivalently, (bi)algebras) will again be cocommutative. The bialgebra quotient $H\xrightarrowdbl{} H'\le B$ is in fact a Hopf quotient by \cite[Theorem 1(vi)]{nic} (or the earlier \cite[Lemma 4.2]{MR374182} it cites). $H'$ is the sought-after unique inclusion-maximal Hopf subalgebra of $B$.
  \end{enumerate}
\end{remarks}

\begin{example}\label{ex:coprod.coc.0coc}
  Consider any family $\left(\bG_i\right)_{i\in I}$ of finite groups whatsoever. I claim that the coalgebra product (equivalently, Hopf-algebra or bialgebra product: the relevant forgetful functors are right adjoints \cite[Theorem 10 2.]{porst_formal-2}) of the complex function algebras $\cO(\bG_i)$ cannot be cocommutative.

  Indeed, the free product (i.e. group coproduct) $\bG:=\coprod_i \bG_i$ is \emph{residually finite} \cite[Theorem 2]{MR144949} and non-abelian under the two-non-trivial-$\bG_i$ assumption, so the \emph{Bohr compactification} $\bG\to \wt{\bG}$ (i.e. the coproduct of the $\bG_i$ in the category of compact groups) is also non-abelian. The Hopf algebra $\cO\left(\wt{\bG}\right)$ of \emph{representative functions} \cite[Definition 3.3]{hm5} is thus not cocommutative; as its morphisms $\cO\left(\wt{\bG}\right)\xrightarrowdbl{} \cO(\bG_i)$ cannot factor through any proper Hopf quotient, the coalgebra product $\prod \cO(\bG_i)$ cannot be cocommutative either.
\end{example}

While \Cref{th:int.inv} and \Cref{cor:coact.on.fd.hopf} are both consequences of \Cref{th:cls.act.arb.halg}, we provide alternative proofs for the purpose of gleaning further automatic invariance phenomena under Hopf coactions perhaps amenable to internalizing in broader classes of symmetric monoidal categories.

Recall \cite[\S 5.0]{swe} the notion of \emph{integral} on a Hopf algebra $H$ over a field $\Bbbk$ (specialized here to that context to fix ideas; the concept internalizes \cite[Definition 3.1]{MR1759389} to braided monoidal categories):
\begin{equation*}
  \begin{gathered}
    \int^{\ell}=\int^{\ell}_{H^*}
    :=
    \tensor*[^H]{\cM}{}(H,\Bbbk)    
    =
    \left\{f\in H^*\ :\ \forall\left(g\in H^*\right)\left(gf=g(1)f\right)\right\}\\
    \int^{r}=\int^{r}_{H^*}
    :=
    \cM^H(H,\Bbbk)
    =
    \left\{f\in H^*\ :\ \forall\left(g\in H^*\right)\left(fg=g(1)f\right)\right\}
  \end{gathered}  
\end{equation*}
with $\cM^H$ denoting right $H$-comodules and similarly for its left-handed version. The two spaces $\int^{\ell,r}$ are simultaneously 0- or 1-dimensional \cite[Theorem 10.9.5(a)]{rad}, and in the latter case there is a uniquely determined $g\in H$ (the Hopf algebra's \emph{distinguished grouplike} \cite[Proposition 10.9.8]{rad}) with
\begin{equation*}
  \forall\left(f\in H^*\right)
  \forall\left(\psi\in \int^{r}\right)
  \left(f\psi = f(g)\psi\right).
\end{equation*}

\begin{theorem}\label{th:int.inv}
  A Hopf coaction $K\xrightarrow{\rho}K\otimes H$ on a Hopf algebra preserving the unital-algebra structure and the comultiplication preserves the left and right integral spaces $\int^{\ell,r}_{K^*}$ and fixes the distinguished grouplike $g\in K$. 
\end{theorem}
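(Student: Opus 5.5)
Since the paper says it wants an alternative to simply citing \Cref{th:cls.act.arb.halg}, I would argue directly from \Cref{cor:qtm.fam.maps}: the coaction $\rho$ is an $H$-based quantum family of morphisms $K\to H\otimes K$, hence also intertwines counits and antipodes. So $K$ is a Hopf-algebra object in right $H$-comodules $\cM^H$, with $K$'s flip an $H$-colinear map (first bullet of \Cref{cor:qtm.fam.maps}). Preservation is then automatic for any subspace of $K^*$ or element of $K$ defined by the Hopf structure through colinear maps.

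\textbf{Integrals.} $H$ coacts on $K^*$ (at least on its rational part; for finite-dimensional $K$ on all of it) contragrediently, via $S_H$. Here I would use only the left and right $K$-comodule structures $\Delta_K$, which are $H$-colinear. I would not assume a braiding on $\cM^H$, since $H$ need not be quasitriangular. The key observation is that $\int^r = \cM^K(K,\Bbbk)$ is the space of $f$ with $(f\otimes\id)\Delta_K = \eta_K f$, where $\eta_K$ is the unit. Applying $\rho$ to this identity and using that $\Delta_K$, $\eta_K$ and $\varepsilon_K$ are $H$-colinear gives
\[
(\id\otimes f\otimes\id)(\rho\otimes\id)\Delta_K = (\id \otimes \eta_K f)\rho .
\]
Rewriting both sides shows that the transformed functional $(f\otimes \id)\rho$, evaluated against any $h^*\in H^*$, is again a right integral. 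Concretely, for $\psi\in\int^r$ and $\phi\in H^*$, the functional $\psi_\phi := (\psi\otimes\phi)\circ\rho$ satisfies
\[
(\psi_\phi\otimes\id)\Delta_K = \psi_\phi(\,\cdot\,)1 ,
\]
which I would check by a one-line Sweedler computation using $\rho$'s multiplicativity in the form $\Delta_{K}$-colinearity, together with $\rho(1)=1\otimes1$. So $\int^r$ is a subcomodule, and the left-handed case is symmetric.

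\textbf{Grouplike.} When the integral spaces are nonzero, $\int^r$ is a one-dimensional subcomodule, so $\psi\mapsto \psi\otimes \chi$ for some grouplike $\chi\in H$. The distinguished grouplike $g$ is characterized by $f\psi = f(g)\psi$ for all $f\in K^*$, i.e.
\[
(\id\otimes\psi)\Delta_K = g\,\psi(\,\cdot\,) .
\]
Applying $\rho$ to this identity and using $\Delta_K$-colinearity, I would get
\[
\rho(g)\,\psi = g\otimes \chi\,\psi ,
\]
with the scalar $\chi$ cancelling on both sides because it appears once in each. Choosing an element where $\psi\neq0$, this reads $\rho(g)=g\otimes 1$. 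Alternatively, $g$ is determined canonically by the Hopf structure, so $\rho(g)$, which is grouplike in $K\otimes H$ and satisfies the same characterization for each $H$-"coordinate", must equal $g\otimes 1$.

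\textbf{Main obstacle.} The hard step is bookkeeping $H$'s side in these identities when $H$ is noncommutative, in particular confirming that the scalar character $\chi$ truly cancels rather than leaving $\chi$ versus $\chi^{-1}$ mismatched. If the direct approach gets tangled, I would fall back on \Cref{th:cls.act.arb.halg}\Cref{item:th:cls.act.arb.halg:halg}: $\rho$ factors through a commutative Hopf $B'$, i.e. an affine group scheme $\bG$ acting by Hopf automorphisms. Since Hopf automorphisms of $K$ (functorially in commutative base rings) preserve the canonically defined $\int^{\ell,r}$ and $g$, the conclusion follows.
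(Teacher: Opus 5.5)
Your direct argument has a genuine gap at the integral step, and it is not where you located it. Use the convention of your display, $\rho(x)=x_{[-1]}\otimes x_{[0]}\in H\otimes K$. Applying $\rho$ to $(\psi\otimes\id)\Delta_K=\eta_K\psi$, and using comultiplication-preservation and $\rho(1)=1\otimes 1$, yields only
\begin{equation*}
  x_{1[-1]}\,x_{2[-1]}\,\psi(x_{1[0]})\otimes x_{2[0]}
  =
  x_{[-1]}\,\psi(x_{[0]})\otimes 1 .
\end{equation*}
This says $\beta(X)=\beta(Y)$, where $X=x_{1[-1]}\psi(x_{1[0]})\otimes x_2$ and $Y=x_{[-1]}\psi(x_{[0]})\otimes 1$ are the two sides of your display and $\beta(h\otimes y):=h\,y_{[-1]}\otimes y_{[0]}$. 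The surviving tensorand still carries its own coaction. Stripping it off means inverting $\beta$, and that uses that $H$ is Hopf: $\beta^{-1}(h\otimes y)=h\,S_H(y_{[-1]})\otimes y_{[0]}$. You mention $S_H$, but your ``one-line computation'' never uses it, and no argument avoiding it can work. Take $K=\Bbbk[\bZ/2]=\Bbbk\{1,t\}$ and the commutative monoid $M=\{\id_K,\eta\varepsilon\}$ of Hopf endomorphisms. The commutative bialgebra $\Bbbk^M$ (not Hopf, since $M$ is not a group) coacts by $x\mapsto\sum_{m\in M}\delta_m\otimes m(x)$. This coaction preserves multiplication, unit, comultiplication, counit, antipode and flip. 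Yet for the integral $\psi$ with $\psi(1)=1$, $\psi(t)=0$, the functional $(\phi\otimes\psi)\rho=\phi(\delta_{\id})\psi+\phi(\delta_{\eta\varepsilon})\varepsilon$ is not an integral once $\phi(\delta_{\eta\varepsilon})\ne 0$. Likewise, the left-handed case is not a verbatim mirror image. There the coefficient to be removed multiplies from the other side, as $y_{[-1]}h$, and cancelling it would call for $S_H^{-1}$. You need either flip-preservation from \Cref{cor:qtm.fam.maps} (the two $H$-coefficients in $\rho^{\boxtimes 2}(x\otimes y)$ may be multiplied in either order) to move it across, or transport along $S_K$, which $\rho$ intertwines and which exchanges $\int^r$ and $\int^{\ell}$.

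With these repairs your route is a legitimate, more hands-on alternative to the paper's. The paper instead realizes $\int^{\ell}_{K^*}$ as the coinvariants of the Hopf module $K^{\circ}$ and reads off invariance structurally. Your grouplike step then works as intended, and it also differs from the paper's. From $x_{[-1]}\psi(x_{[0]})=\psi(x)\chi$ one gets $\rho(g)(\chi\otimes 1)=\chi\otimes g$, hence $\rho(g)=1\otimes g$ because $\chi$ is invertible. So your $\chi$-versus-$\chi^{-1}$ worry is unfounded. The paper instead shows that the line $\Bbbk g$ is invariant via an internal-hom pullback and kills its character $\gamma$ using $\Delta g=g\otimes g$, i.e.\ $\gamma^2=\gamma$. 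Your fallback through \Cref{th:cls.act.arb.halg} is sound, modulo the routine check that integrals are compatible with base change to commutative $R$. However, it is precisely the reduction the paper deliberately avoids here.
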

\begin{proof}
  \begin{enumerate}[(I),wide]
  \item\textbf{: Integral-space invariance.} The $H$-invariance of $K$'s entire Hopf structure $(\mu,\eta,\Delta,\varepsilon,S)$ is automatic by \Cref{th:hopf.rec.flip} (and \Cref{cor:qtm.fam.maps}, for the purpose of verifying the former's hypothesis), along with the flip map $\tau_{K,K}$. The following, then, are also $H$-invariant:
    \begin{itemize}[wide]
    \item the associative-algebra structure on $K^*$ dual to $(\Delta,\varepsilon)$;
    \item hence also the \emph{finite dual} $K^{\circ}$, for it is \cite[Proposition 6.0.3]{swe} the pullback
      \begin{equation*}
        \begin{tikzpicture}[>=stealth,auto,baseline=(current  bounding  box.center)]
          \path[anchor=base] 
          (0,0) node (l) {$K^{\circ}$}
          +(2,.5) node (u) {$K^*$}
          +(2,-.5) node (d) {$K^*\otimes K^*$}
          +(5,0) node (r) {$\left(K\otimes K\right)^*$}
          ;
          \draw[right hook->] (l) to[bend left=6] node[pos=.5,auto] {$\scriptstyle $} (u);
          \draw[->] (u) to[bend left=6] node[pos=.5,auto] {$\scriptstyle \mu^*$} (r);
          \draw[->] (l) to[bend right=6] node[pos=.5,auto,swap] {$\scriptstyle $} (d);
          \draw[right hook->] (d) to[bend right=6] node[pos=.5,auto,swap] {$\scriptstyle $} (r);
        \end{tikzpicture}
      \end{equation*}

    \item with it, the right \emph{$K$-Hopf-module} structure $K^{\circ}\in \cM^K_K$ of \cite[Theorem 5.2.1]{dnr}.
    \end{itemize}
    The left-handed conclusion follows from \cite[Lemma 5.2.2]{dnr}'s realization $\int^{\ell}_{K^*}=\left(K^{\circ}\right)^K$; the argument's mirror image runs parallel.
    
  \item\textbf{: Distinguished grouplike.} Writing $[-,-]$ for \emph{internal homs} in closed monoidal categories \cite[\S 1.5]{kly}, note that the line $\Bbbk g\le K\le K^{**}$ can be recovered as the pullback
    \begin{equation*}
      \begin{tikzpicture}[>=stealth,auto,baseline=(current  bounding  box.center)]
        \path[anchor=base] 
        (0,0) node (l) {$\Bbbk g$}
        +(2,.5) node (u) {$\mathbf{1}$}
        +(2,-.5) node (d) {$\left[K^*,\mathbf{1}\right]$}
        +(5,0) node (r) {$\left[K^*,\left[\int^r,\int^r\right]\right]$}
        ;
        \draw[->] (l) to[bend left=6] node[pos=.5,auto] {$\scriptstyle $} (u);
        \draw[->] (u) to[bend left=6] node[pos=.5,auto] {$\scriptstyle $} (r);
        \draw[->] (l) to[bend right=6] node[pos=.5,auto,swap] {$\scriptstyle $} (d);
        \draw[->] (d) to[bend right=6] node[pos=.5,auto,swap] {$\scriptstyle \left[K^*,\mathrm{can}\right]$} (r);
      \end{tikzpicture}
    \end{equation*}
    where $\mathbf{1}\xmapsto{\mathrm{can}}[-,-]$ denotes the canonical morphism adjoint to the identity and the top right-hand arrow is derived from the multiplication $K^*\otimes \int^r\to \int^r$. The diagram being interpretable $H$-equivariantly, $\Bbbk g\le K$ is $H$-invariant and hence the $H$-coaction thereon is \emph{$\gamma$-graded} for some grouplike $\gamma\in H$:
    \begin{equation*}
      \Bbbk g\ni x
      \xmapsto{\quad}
      x\otimes \gamma.
    \end{equation*}
    That coaction also preserving $K$'s comultiplication by assumption, $\Delta g=g\otimes g$ implies that $\gamma$ is idempotent and hence 1. 
  \end{enumerate}
\end{proof}

\begin{remark}\label{re:int.not.fixed}
  If \Cref{th:int.inv}'s $K$ is cosemisimple, or equivalently \cite[Theorem 10.8.2]{rad} has a left (or right) integral not vanishing on $1$, the $H$-coaction will fix the unique unital (left \emph{and} right) integral. In general, the theorem's conclusion cannot be strengthened in this fashion even when $\dim K<\infty$.

  The Hopf algebra $K:=H_{2,-1}$ of \cite[\S 7.3]{rad} in field characteristic $\ne 2$, defined as
  \begin{equation*}
    H_{2,-1}
    :=
    \Braket{g,x\ |\ g^2=1,\ x^2=0,\ gxg^{-1}=-x}
    ,\quad
    \begin{gathered}
      \Delta g=g\otimes g\\
      \Delta x = x\otimes g+1\otimes x
    \end{gathered}
  \end{equation*}
  admits a $\bZ$-grading (i.e. an action by the multiplicative algebraic group $\bG_m$ \cite[\S 1.1]{wat_gpsch}) according $g$ degree 0 and $x$ degree 1. It is not difficult to see that the non-zero left integrals of $H_{2,-1}$ will not annihilate $x$, so will not be invariant under the action/grading.
\end{remark}

The following alternative approach to \Cref{th:cls.act.arb.halg}'s finite-dimensional-$K$ case relies on \Cref{th:int.inv} only, so will be available in contexts where the latter survives but \Cref{th:cls.act.arb.halg} perhaps does not.

\begin{corollary}\label{cor:coact.on.fd.hopf}
  A Hopf coaction $K\to K\otimes H$ on a finite-dimensional Hopf algebra preserving both the unital-algebra structure and the comultiplication factors through a commutative-Hopf-algebra coaction.
\end{corollary}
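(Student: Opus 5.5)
Since $K$ is finite-dimensional, the plan is to get commutativity of the coefficient coalgebra of $\rho$ from integrals, using \Cref{th:int.inv} in place of the Tannakian flip argument behind \Cref{th:cls.act.arb.halg}.

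\textbf{Step 1: an invariant nondegenerate form.} By \Cref{th:int.inv}, $\rho$ preserves the one-dimensional space $\int^r_{K^*}$; finite-dimensionality guarantees it is non-zero. So $H$ coacts on $\int^r=\Bbbk\psi$ through some grouplike $\chi\in H$, in the sense that $\psi$ is $\chi$-semi-invariant. The Frobenius property of finite-dimensional Hopf algebras then gives a nondegenerate pairing
\begin{equation*}
  K\otimes K\ni x\otimes y\longmapsto \psi(xy)\in\Bbbk .
\end{equation*}
Because $\rho$ is multiplicative, this pairing is $\chi$-semi-invariant. Consequently $K$ is identified, $H$-colinearly up to the twist by $\chi$, with its own dual comodule $K^*$. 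This exhibits $K$ as a comodule with a dual inside $\cM^H$, even when $H$ has no bijective antipode to make duals automatic.

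\textbf{Step 2: automatic invariance of the flip.} I would then run \Cref{th:hopf.rec.flip} inside the rigid monoidal subcategory of $\cM^H$ generated by $K$, $K^*$ and $\Bbbk\chi$. In this subcategory the structure maps $\mu,\eta,\Delta$ are morphisms by hypothesis. \Cref{th:int.inv} additionally supplies $\varepsilon$ and $S$ (via $\psi$ and $g$) as morphisms, without appealing to \Cref{eq:weak.rtrct}. The displayed formula for $\tau_{K,K}$ in the proof of \Cref{th:hopf.rec.flip} then shows that the flip is $H$-colinear.

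\textbf{Step 3: commutativity.} Flip-colinearity forces the coefficient coalgebra $C\le H$ of $\rho$ to commute, exactly as in \Cref{eq:c2b.comm}. The form from Step 1 shows that $C$, together with $\chi^{\pm1}$, is closed under the operation that plays the role of the antipode on matrix coefficients. Concretely, the coefficients of $K^*$ are $S(C)$, and these are re-expressed through $C$ and $\chi^{-1}$. Here $\chi$ is invertible because it is a grouplike in the Hopf algebra $H$.

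So the subalgebra generated by $C$ and $\chi^{\pm1}$ is a commutative sub-bialgebra stable under $S$, i.e.\ a commutative Hopf subalgebra, and $\rho$ factors through it.

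\textbf{Main obstacle.} The hard step is Step 3: showing that $S(C)$ commutes with $C$ and lies in the algebra generated by $C$ and $\chi^{\pm1}$. I would first try to read off $S$ on coefficients directly from the invariance of $\psi(xy)$. Concretely, write $\rho(e_i)=\sum_j e_j\otimes c_{ji}$ in a basis that is dual with respect to the form. Semi-invariance then says that the matrix $(c_{ji})$ has an inverse given by a transposed twisted copy, namely $\chi^{-1}(c_{ij})$ up to the Gram matrix. Commutativity of $C$ then transfers to $S(C)$.
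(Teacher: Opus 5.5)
Your proposal is correct in substance, but it takes a different route from the paper.

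\textbf{The paper's route.} The paper's proof is Tannakian. It reconstructs the universal Hopf algebra coacting on $K$ from the rigid monoidal subcategory of finite-dimensional vector spaces generated by $(\mu,\eta,\Delta)$, which already contains $\tau_{K,K}$. It then uses the Frobenius isomorphism $\psi$ and the grouplike grading supplied by \Cref{th:int.inv} to show that $\tau_{K,K^*}=(\psi\otimes\id)\,\tau_{K,K}\,(\id\otimes\psi^{-1})$ is equivariant. The rigid subcategory is therefore symmetric, which forces the reconstructed Hopf algebra to be commutative.

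\textbf{Your route, and the main obstacle.} You stay at the level of matrix coefficients, and the computation you sketch does resolve the obstacle you flag. Write $\rho(e_i)=\sum_j e_j\otimes c_{ji}$ and $G_{ij}:=\psi(e_ie_j)$. Semi-invariance of $\psi\circ\mu$ reads $\sum_{k,l}c_{ki}G_{kl}c_{lj}=\chi G_{ij}$, i.e.\ $c^{T}Gc=\chi G$ in $M_n(H)$. Hence $\chi^{-1}G^{-1}c^{T}G$ is a left inverse of $c$. Since $S(c)$ is a two-sided inverse of $c$, the two coincide, giving $S(c_{ij})=\chi^{-1}\sum_{k,l}(G^{-1})_{ik}c_{lk}G_{lj}\in\chi^{-1}C$.

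You leave one point unjustified: that $\chi^{\pm1}$ commute with $C$. This follows from the same identity. Any $G_{ij}\ne0$ exhibits $\chi$ as an element of the commutative algebra generated by $C$, so $\chi$ commutes with $C$, and hence so does $\chi^{-1}$. With this in place, the algebra generated by $C$ and $\chi^{\pm1}$ is a commutative, $S$-stable sub-bialgebra, as you claim.

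\textbf{Comparison.} Both proofs rest on the same input: the semi-invariant Frobenius form. Equivariance of $\tau_{K,K^*}$ says exactly that $C$ commutes with $S(C)$, the coefficients of $K^*$. Your version avoids reconstruction theory and the universal coacting Hopf algebra. It also exhibits the commutative Hopf subalgebra and its antipode explicitly; the form plays the role that the determinant $\bigwedge^n V$ plays in the proof of \Cref{th:cls.act.arb.halg}. The paper's phrasing, as symmetry of a rigid monoidal subcategory, suits its stated aim of arguments that might internalize beyond vector spaces.

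\textbf{Step 2 needs a different justification.} \Cref{th:int.inv} asserts nothing about $\varepsilon$ or $S$, and you give no mechanism by which $\psi$ and $g$ would produce them. Moreover, its proof obtains the invariance of $\varepsilon$ and $S$ from \Cref{th:hopf.rec.flip} via \Cref{cor:qtm.fam.maps}, that is, precisely through \Cref{eq:weak.rtrct}. The advertised bypass of the retraction closure is therefore circular. Nor can \Cref{th:hopf.rec.flip} be run inside $\cM^H$, which is not symmetric.

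None of this is needed. \Cref{cor:qtm.fam.maps} gives $H$-colinearity of $\tau_{K,K}$ directly, and the paper uses it in the same way. The aside in Step 1 about duals in $\cM^H$ is likewise unnecessary: your final argument uses the form only through the matrix identity above.
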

\begin{proof}
  The universal Hopf algebra coacting on $K$ in the stated fashion is that \emph{Tannaka-reconstructed} by means of \cite[Theorems 2.1.12 and 2.4.2]{schau_tann} from the inclusion functor into $\cat{Vect}_f$ (finite-dimensional vector spaces) of the \emph{rigid} \cite[Definition 2.10.11]{egno} monoidal subcategory of $\cat{Vect}_f$ generated by $(\mu,\eta,\Delta)$, hence also containing $\varepsilon$, $S$ and the flip $\tau_{K,K}$ by \Cref{th:hopf.rec.flip}: the closure property will be automatic, by \Cref{le:auto.pres.inv.splt}. 

  Recall \cite[Theorem 2.1.3]{mont} that finite-dimensional Hopf algebras $K$ are \emph{Frobenius}: $K\overset{\psi}{\cong} K^*$ as right $K$-modules, with $\psi$ implemented by the non-degenerate bilinear form
  \begin{equation*}
    \begin{gathered}
      K\otimes K
      \xrightarrow{\quad\mu\quad}
      K
      \xrightarrow{\quad\int\quad}
      \mathbf{1}
    \end{gathered}    
  \end{equation*}
  for a left integral $\int\in \int^{\ell}_{K^*}$. The commutative diagram
  \begin{equation*}
    \begin{tikzpicture}[>=stealth,auto,baseline=(current  bounding  box.center)]
      \path[anchor=base] 
      (0,0) node (l) {$K^{\otimes 2}$}
      +(2,.5) node (u) {$K^{\otimes 2}$}
      +(2,-.5) node (d) {$K\otimes K^*$}
      +(5,0) node (r) {$K^*\otimes K$}
      ;
      \draw[->] (l) to[bend left=6] node[pos=.5,auto] {$\scriptstyle \tau_{K,K}$} (u);
      \draw[->] (u) to[bend left=6] node[pos=.5,auto] {$\scriptstyle \psi\otimes \id$} (r);
      \draw[->] (l) to[bend right=6] node[pos=.5,auto,swap] {$\scriptstyle \id\otimes \psi$} (d);
      \draw[->] (d) to[bend right=6] node[pos=.5,auto,swap] {$\scriptstyle \tau_{K,K^*}$} (r);
    \end{tikzpicture}
  \end{equation*}
  exhibits $\tau_{K,K^*}$ as a composition of morphisms with the following properties:
  \begin{itemize}[wide] 
  \item $\tau_{K,K}$ is $H$-equivariant as already noted;
  \item by \Cref{th:int.inv}, $\psi\otimes \id$ generates a one-dimensional $H$-subcomodule of $\cat{Vect}\left(K^{\otimes 2},K^*\otimes K\right)\in \cM^H$, so that the coaction is expressible as $\psi\mapsto \psi\otimes g$ for a grouplike $g\in H$;

  \item and by the same token, $\id\otimes \psi^{-1}$ is $g^{-1}$-graded. 
  \end{itemize}
  All in all, then, $\tau_{K,K^*}$ is $H$-equivariant. 

  As the monoidal category generated by the Hopf structure on $K$ already contains the flip $\tau_{K,K}$, it will contain $\tau_{K,K^*}$ also and thus be a \emph{symmetric} rigid monoidal subcategory of $\cat{Vect}$. This forces commutativity on the Tannaka-reconstructed coacting Hopf algebra, hence the statement's factorization.
\end{proof}


\addcontentsline{toc}{section}{References}

\def\polhk#1{\setbox0=\hbox{#1}{\ooalign{\hidewidth
  \lower1.5ex\hbox{`}\hidewidth\crcr\unhbox0}}}


\Addresses

\end{document}